\documentclass[11pt,reqno]{amsart}
\usepackage[all]{xy}
\usepackage[latin1]{inputenc} 
\usepackage[T1]{fontenc}
\usepackage[dvips]{graphics,graphicx}
\usepackage{amsfonts,mathabx}
\usepackage{amssymb}
\usepackage{amsmath}
\usepackage{mathrsfs, mathtools}
\usepackage{esint}
\usepackage{array, makecell} 
\allowdisplaybreaks

\usepackage{amsbsy,
	amsopn,
	amscd, 
	amsxtra, 
	amsthm,
	verbatim}
\usepackage{upref}
\usepackage[dvipsnames]{xcolor}
\usepackage{enumerate}
\usepackage[colorlinks,
linkcolor=red,
anchorcolor=red,
citecolor=red
]{hyperref}
\usepackage{cleveref}

\usepackage{thmtools}
\usepackage{caption}
\usepackage{subcaption}
\usepackage{verbatim}
\usepackage{booktabs}
\usepackage{longtable}

\newtheorem{theorem}{Theorem}

\newtheorem{lemma}{Lemma}[section]

\newtheorem*{proposition*}{Proposition}

\newtheorem*{corollary*}{Corollary}
\newtheorem{assumption}{Assumption}

\theoremstyle{definition}
\newtheorem{remark}[lemma]{Remark}
\newtheorem{example}[lemma]{Example}

\newcommand{\lihan}[1]{\textcolor{red}{#1}}
\newcommand{\pierre}[1]{\textcolor{blue}{#1}}

\newcommand{\po}{\left(}
\newcommand{\pf}{\right)}

\usepackage{geometry}
\newcommand{\R}{\mathbb{R}}
\newcommand{\dd}{\,\mathrm{d}}
\newcommand{\E}{\mathbb{E}}
\newcommand{\pp}{\mathbb{P}}
\newcommand{\ent}{\mathrm{Ent}}
\newcommand{\KL}{\mathrm{KL}}
\newcommand{\coot}{\mathfrak{C}_{\mathrm{OT}}}
\newcommand{\bangle}[1]{\langle #1\rangle}

\title[Entropic Convergence for PDMP]{On the entropic convergence for piecewise deterministic samplers: speedup and obstruction}
\author{Pierre Monmarch\'e}
\address{LAMA, Universit\'e Gustave Eiffel, 77420 Champs-sur-Marne, France.}
\email{pierre.monmarche@univ-eiffel.fr}
\author{Lihan Wang} \address{Department of Mathematics, National University of Singapore, 119076 Singapore}
\email{lihanw@nus.edu.sg}
\date{\today}

\begin{document}

\begin{abstract}
For piecewise deterministic samplers such as  Randomized Hamiltonian Monte Carlo (RHMC),  Bouncy Particle Sampler (BPS) or Zig-Zag Process (ZZP),    long-time exponential convergence rates have been established in previous works using Harris or $L^2$ hypocoercivity approaches. In particular, in the $L^2$ framework, a so-called \emph{diffusive-to-ballistic} speedup was known for log-concave targets, according to which the convergence rates of these samplers, with suitable parameters, are quadratically improved with respect to the standard overdamped Langevin diffusion process. A recent work by Jianfeng Lu showed that this speedup also holds for the kinetic Langevin diffusion process when the convergence is stated in terms of relative entropy, raising the question whether this also holds for piecewise deterministic samplers. The present work provides a positive and a negative answer to this: first, we show that the speedup holds in entropy for RHMC; second, we show that for BPS or ZZS, even for a standard Gaussian target, a similar result cannot hold, and even that exponential convergence (at any rate) in entropy fails. 
\end{abstract}

\maketitle

\section{Introduction and Main Results}

\subsection{Settings and objective}

Among Markov Chain Monte Carlo (MCMC) methods, an important family of samplers is given by kinetic processes, of the form $(X_t,V_t)$ with $X_t$  the position and $V_t = \dot X_t$ the velocity. The dynamics are designed such that, at equilibrium $X_t$ and $V_t$ are independent, respectively distributed according to the target measure $\mu_x\propto \exp(-U)$ and some auxiliary velocity equilibrium $\kappa$ (often a Gaussian distribution). Velocity playing the role of an additional auxiliary variable,  kinetic samplers are an important class among so-called lifted processes, which provide general constructions to design non-reversible samplers  \cite{diaconis2000analysis,turitsyn2011irreversible,vucelja2016lifting}. The motivation for such methods is to avoid the random walk (a.k.a. diffusive) behavior which is inherently associated to the reversibility condition  (at least for processes with small local steps). In particular,  by keeping the same direction for several steps due to their inertia, kinetic processes exhibit ballistic behavior (i.e. cover a distance of order $k$ in $k$ steps, instead of $\sqrt{k}$ as random walks), at least over the velocity correlation time-scale. This is known to improve the convergence rate to equilibrium with respect to standard reversible processes in some cases, with suitable choices of  parameters. See \cite{diaconis2000analysis, chen1999lifting} for processes in discrete spaces, \cite{cao2023explicit,  lu2022explicit,EberleLift} for processes in continuous spaces, as well as \cite{EberleLift,altschuler2025shifted,MonmarcheNesterov} for recent discussions on this topic and further references.

\smallskip

An important kinetic sampler is the (underdamped/kinetic) Langevin diffusion, which is in particular broadly used in molecular simulations~\cite{lelievre2016partial}. Its diffusive-to-ballistic speed-up is understood in a series of works, including \cite{cao2023explicit, EberleLift, fan2026sharp, lu2026sharp}. However, in the present work, we focus on non-diffusive processes, which are piecewise-deterministic Markov processes. These so-called velocity jump processes~\cite{MRZ} follow a deterministic dynamics (usually an Hamiltonian dynamics or even simple free transport) between random jumps of the velocity. A convenient way to describe the specific dynamics of such a Markov process is to provide its infinitesimal generator \cite{durmus2021piecewise}. In this work we consider the three most classical examples of such processes:
\begin{itemize}
    \item The \emph{Randomized Hamiltonian Monte Carlo} (RHMC) dynamics \cite{bou2017randomized}, with generator
    \begin{equation}
        \label{eq:generatorRHMC}
        \mathcal L_{RHMC} f = v\cdot \nabla_x f  - \nabla_x U(x) \cdot \nabla_v f + \gamma (\Pi_v f - f),
    \end{equation}
    where $\Pi_v \varphi(x) = \int_{\R^d}\varphi(x,w)\kappa(\dd w)$, with velocity equilibrium 
    \begin{equation}
        \label{eq:kappa=Gaussian}
        \dd \kappa  = (2\pi)^{-\frac{d}{2}}\exp\po -\frac{|v|^2}{2}\pf \dd v\,.
    \end{equation}
    The equation $\partial_t g= \mathcal{L}_{RHMC}^* g$ is also known as the \emph{linear Boltzmann equation}.
    \item The \emph{Zig-Zag process} (ZZP) \cite{bierkens2019zig}, with generator 
        \begin{equation}
        \label{eq:generatorZZP}
        \mathcal L_{ZZP} f= v\cdot \nabla_x f  + \sum_{i=1}^d (v_i \partial_{x_i} U)_+ (f(x,v-2v_i e_i) - f) + \gamma (\Pi_v f- f),
    \end{equation}
    where $e_i$ is the $i^{th}$ vector of the canonical basis,  $(s)_+=\max\{s,0\}$ and the velocity equilibrium $\kappa$ is either uniform over $\{-1,1\}^d$ (the state space of the process is then $\R^d\times\{-1,1\}^d$) or standard Gaussian \eqref{eq:kappa=Gaussian}.
    \item The \emph{Bouncy Particle Sampler} (BPS) \cite{bouchard2018bouncy} with generator
           \begin{equation}
        \label{eq:generatorBPS}
        \mathcal{L}_{BPS}f := v\cdot \nabla_x f +(v\cdot \nabla U)_+ (\mathcal{B} f-f) +\gamma(\Pi_v f-f),    \end{equation} 
   where 
\[ \mathcal{B} f(t,x,v) =f(t,x,v-2(v\cdot n)n), \quad n = \frac{\nabla U}{|\nabla U|}. \]
Here, the velocity equilibrium $\kappa$ can be any rotation invariant measure over $\R^d\setminus\{0\}$ (typically, the isotropic Gaussian distribution~\eqref{eq:kappa=Gaussian}, or a uniform measure over a ball or a sphere).
\end{itemize}
In each of the cases~\eqref{eq:generatorRHMC}, \eqref{eq:generatorZZP} and \eqref{eq:generatorBPS}, the term $\gamma (\Pi_v f-f)$ corresponds to velocity refreshments: at constant rate $\gamma>0$, the current velocity is discarded and a new velocity is sampled at equilibrium. This is meant to ensure the ergodicity of the process: without it, the Hamiltonian dynamics in~\eqref{eq:generatorRHMC} leaves invariant any density which is a function of the Hamiltonian $H(x,v)=U(x)+\frac12|v|^2$, and the BPS may also have irreducibility issues~\cite{bouchard2018bouncy}. In some situations, the ZZP might still be irreducible even when $\gamma=0$ \cite{bierkens2019ergodicity}, but in practice it is also always used with $\gamma>0$.

\smallskip

Both ZZP and BPS have been introduced as continuous-time limits of non-reversible lifted Markov chains as in~\cite{diaconis2000analysis}, see \cite{PetersdeWith,10.1214/16-AAP1217,M24}.  Contrary to RHMC for which the Hamiltonian dynamics is approximated in practice by a Verlet integration, ZZP and BPS can be simulated exactly in continuous-time resorting to Poisson thinning~\cite{bouchard2018bouncy}.

\medskip 

Before discussing the existing literature in more detail in the next section, let us first  simply say that, in the recent work~\cite{lu2026sharp}, Jianfeng Lu establishes the ballistic speed-up for the Langevin diffusion with the convergence stated in terms of relative entropy (a.k.a.\ Kullback-Leibler (KL) divergence). The question addressed in the present work is whether the same result hold for RHMC, ZZP and BPS.

\medskip 

The rest of this work is organized as follows. In the remaining of this introduction, after discussing previous works, we state our two main results, Theorem~\ref{thm:rhmc} (for RHMC) and Theorem~\ref{thm:noquantconv} (for ZZP and BPS). These two results are then respectively proven in Sections~\ref{sec:rhmc} and~\ref{sec:nonconvpdmp}. Section~\ref{sec:conclusion} provides some perspectives for future works.

 \subsection{Known results}\label{subsec:previouswork} To motivate our study and emphasize the specificity of our work, let us discuss existing convergence rates for kinetic samplers.

 \smallskip

Among kinetic processes, before being transferred to other processes, convergence results have always been obtained first on the kinetic Langevin diffusion. This has been done with various approaches, which can generally be sorted into two categories: the probabilistic approaches, which use stochastic analysis, including Lyapunov functions \cite{talay2002stochastic,mattingly_ergodicity_2002} or coupling approach \cite{eberle_couplings_2019} (providing results in weighted total variation norms or Wasserstein distances), or the analytic approaches, which use PDE tools to study the long-time convergence of the corresponding Kolmogorov/Fokker-Planck equation. Among analytic approaches, there are H{\'e}rau-Nier hypoellipticity arguments~\cite{herau2004isotropic}, Villani's hypocoercivity framework \cite{villani_hypocoercivity_2009}, the modified $L^2$ framework \cite{dolbeault_hypocoercivity_2015} (which we refer to as the DMS framework), and the space-time Poincar\'e inequality approach in \cite{albritton2024variational}. While many of these approaches fail to yield sharp convergence rates of the dynamics, recent progress has been made in this direction, starting from \cite{cao2023explicit, EberleLift} using the approach of \cite{albritton2024variational}, and \cite{fan2026sharp} which refines the DMS estimates. These works manage to capture the diffusive-to-ballistic speed-up as the convergence rates obtained in these works achieve a square-root acceleration compared to the corresponding reversible process when $U$ is convex. However, all these works derive convergence in $L^2(\mu)$ (where $\mu = \mu_x \otimes \kappa$), which is equivalent to $\chi^2(p_t\|\mu):=\int(\frac{\dd p_t}{\dd \mu}-1)^2 \dd \mu$ converging to zero, instead of relative entropy. Contrary to $\chi^2$, the relative entropy is suitable for the study of high-dimensional problems such as cut-off phenomenon~\cite{chafai2024cutoff} or mean-field particle systems  and their non-linear limits~\cite{MonmarcheMFkin,10.1214/24-EJP1079,MonmarcheNesterov}, which is related to its linear scaling  in terms of the dimension (while $\chi^2(\nu^{\otimes N}\|\mu^{\otimes N})$ grows exponentially with $N$ whenever $\nu\neq \mu$). Moreover,  results in relative entropy are amenable to numerical analysis for discretized schemes~\cite{vempala2019rapid,Chatterji,Camrudetal}. This is not the case of results in $\chi^2$, which is why, as can be seen in~\cite{altschuler2025shifted} the ballistic speed-up cannot be obtained for practical schemes directly from the $L^2$ results from~\cite{cao2023explicit, EberleLift}  proven for the continuous-time process (as it would be from the same results in entropy, cf. \cite[Remark 1]{MonmarcheNesterov} and the discussion after Theorem 5.11 in \cite{altschuler2025shifted}).  Besides, while entropic long-time convergence was known for the Langevin diffusion since \cite{villani_hypocoercivity_2009}, the rates where not sharp in the convex case, and the ballistic acceleration was only proved in the recent breakthrough \cite{lu2026sharp}. 

\smallskip 

 Several exponential convergence results have been obtained for PDMP samplers, including RHMC, ZZP and BPS, with coupling and Lyapunov techniques \cite{M24,durmus2020geometric,bierkens2019ergodicity} or $L^2$ approaches \cite{herau2006hypocoercivity,andrieu2021hypocoercivity,10.1214/20-AAP1659,lu2022explicit,MRZ,EberleLift}. Some of these results will be discussed further in Section~\ref{subsec:previous2} in  light of  our results. The results of~\cite{lu2022explicit,EberleLift} provide sharp rates. From these positive results in $L^2$, one could expect that sharp and explicit entropic convergence rates for PDMPs should be achievable, following the approach of \cite{lu2026sharp}. This is the main motivation of our work.

\subsection{Notations and results}

Throughout we do not distinguish between a probability measure and its density if it is absolutely continuous with respect to the Lebesgue measure. Let $p_t$ denote the law of $(X_t,V_t)$ along the dynamics, and let $g_t := \frac{\dd p_t}{\dd \mu_x \otimes \kappa}$ be the relative density, then $p_t$ satisfies the forward Kolmogorov equation
\begin{equation}\label{eq:flowpt}
    \partial_t p_t = \mathcal L^\dagger  p_t,
\end{equation}
where $\mathcal{L}^\dagger$ denotes the $L^2(\R^d \times \R^d)$-dual operator of the generator $\mathcal{L}\in\{\mathcal L_{RHMC},\mathcal L_{ZZP},\mathcal L_{BPS}\}$. Meanwhile, $g_t$ satisfies \[ \partial_t g_t = \mathcal{L}^* g_t,\] where $\mathcal{L}^*$ is the $L^2(\mu)$-adjoint of $\mathcal{L}.$ We adapt the notation of \cite{lu2026sharp}: we define $q_t:=\Pi_v g_t$ to be the relative $x$-marginal of the PDMP, and $h_x(v):=g(x,v)/q(x)$ be the conditional density on $v$, defined for any $x$ such that $q(x)>0$. For probability densities $p$, we use $\KL(p\|\mu):= \int \log \frac{\dd p}{\dd \mu}\dd p$ or $\KL(q\mu_x\|\mu_x)$ to denote the full relative entropy or relative entropy in $x$-marginals. For relative densities $q(x), \, g(x,v), \, h_x(v)$, we use the notations $\ent_x(q):=\int q\log q\dd \mu_x$, $\ent_\kappa(h_x):= \int h_x\log h_x \dd \kappa(v)$, $\ent(g):= \int g\log g\dd \mu$ and $\ent_v(g) := \int q(x) \ent_\kappa(h_x) \dd \mu_x$. Note that $\KL(p_t\|\mu)=\ent(g) = \ent_x(q)+\ent_v(g)$.  For $p \geqslant 1$ we write $\mathcal W_p(\nu,\nu')$ the $L^p$-Wasserstein distance between two probability measures $\nu,\nu'$ over $\R^d$, given by
\[\mathcal W_p^p(\nu,\nu') = \inf_{\pi \in \Pi(\nu,\nu')} \int_{\R^{2d}} |x-y|^p \pi(\dd x\dd y)\,, \]
with $\Pi(\nu,\nu')$ the set of couplings of $\nu $ and $\nu'$.

\begin{assumption}\label{ass:JLass1}
The probability measure $\mu_x \propto \exp(-U(x))$ satisfies the logarithmic Sobolev inequality (LSI) with constant $\rho>0$:
\begin{equation}\label{eq:LSI}
    \ent_x(f) \le \frac{1}{2\rho}\int \frac{|\nabla f|^2}{f}\dd \mu_x, \quad \forall f\ge 0, \, \int f\dd \mu_x=1.
\end{equation}
Moreover, the Hessian of the potential $U \in C^\infty(\R^d)$ is bounded from below $ \nabla^2 U \ge -K\mathrm{Id}$ with $0\le K <\rho$, and all derivatives of second-order or above of $U$ are uniformly bounded in $\R^d$: $\sup_{x\in \R^d} |\nabla^\alpha U(x)|<\infty$ for any $|\alpha|\ge 2$.
\end{assumption}

\begin{remark}
    We would like to comment here that $0\le K <\rho$ is a natural condition for us to directly apply the approach of \cite{lu2026sharp} to obtain convergence in relative entropy. Indeed, it was proved in \cite{otto2000generalization} the following HWI inequality:
    \begin{equation}\label{eq:hwi} \ent_x(q) \le \mathcal{W}_2(q\mu_x,\mu_x)\sqrt{I(q\mu_x\|\mu_x)}+\frac{K}{2}\mathcal{W}_2^2(q\mu_x,\mu_x),\end{equation}
    where $I$ denotes the relative Fisher information. After a careful inspection on the proof of \cite{lu2026sharp}, we find that it only explicitly uses the Talagrand inequality, which is weaker than the LSI \cite{otto2000generalization}
\begin{equation}\label{eq:talagrand}  \mathcal{W}_2^2(q\mu_x,\mu_x) \le \frac{2}{\rho}\ent_x(q).\end{equation} The combination of \eqref{eq:talagrand} and \eqref{eq:hwi} then indicates
\[ \ent_x(q) \le \sqrt{\frac{2}{\rho}\ent_x(q)}\sqrt{I(q\mu_x\|\mu_x)}+\frac{K}{\rho}\ent_x(q),\] and hence Talagrand inequality implies LSI when $K<\rho$, which is consistent with our results since we expect LSI to be necessary for entropic exponential convergence to hold. 
\end{remark}
\begin{remark}
    The condition that all derivatives of second-order or above of $U$ are uniformly bounded is only stated for technical reasons and we expect that it can be significantly weakened (these bounds do not appear quantitatively in the result). We only used this condition for the regularity argument in the proof of Theorem \ref{thm:rhmc}.
\end{remark}

Our first main theorem states that RHMC also has entropic exponential convergence and obtains square-root acceleration with a careful choice of $\gamma$ and if the non-convexity of $U$ is mild. 
\begin{theorem}\label{thm:rhmc}
    Let $p_t$ satisfies \eqref{eq:flowpt} with initial condition $p_0$ and $\mathcal{L}$ given by RHMC. Assuming $U(x)$ satisfies Assumption \ref{ass:JLass1}. Let $\gamma = \Gamma \sqrt{\rho}$ with $\Gamma>0$ and choose $0<\theta \le \min \{ \frac{\Gamma}{18}, \frac{1}{4\Gamma}\po 1-\frac{K}{\rho}\pf \}$. Set $\lambda_\Gamma := \po 1-\frac{K}{\rho}\pf\frac{\theta}{2(1+\theta)}.$ Then, we have for every finite-entropy initial condition $p_0$,
    \[ \KL(p_t\|\mu) \le \frac{1+\theta}{1-\theta}\exp(-\lambda_\Gamma\sqrt{\rho}t)\KL(p_0\|\mu).\]
    In particular, when $K=0$, we recover the square-root entropic acceleration result as in \cite{lu2026sharp} for the Langevin diffusion.
\end{theorem}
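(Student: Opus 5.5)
The plan is to follow the modified-entropy strategy of \cite{lu2026sharp}, adapted to the jump refreshment of RHMC. I would define the Lyapunov functional
\[
\mathcal H(t) := \KL(p_t\|\mu) + \frac{\theta}{\sqrt\rho}\, A_t, \qquad A_t := \int (x - T_t(x))\cdot v \, p_t(\dd x\dd v),
\]
where $T_t$ is the optimal (Brenier) map pushing $q_t\mu_x$ onto $\mu_x$. Equivalently, $A_t = \int (x-T_t(x))\cdot m_t(x)\, q_t(x)\mu_x(\dd x)$ with $m_t(x)=\int v\, h_x(v)\kappa(\dd v)$ the local mean velocity. Because the $x$-marginal evolves by the continuity equation with current $q_t m_t\mu_x$, the Benamou--Brenier/Otto calculus gives $A_t=\frac12\frac{\dd}{\dd t}\mathcal W_2^2(q_t\mu_x,\mu_x)$.

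The first step is the equivalence $\frac{1-\theta}{1}\KL\le \mathcal H\le (1+\theta)\KL$. Cauchy--Schwarz gives $|A_t|\le \mathcal W_2(q_t\mu_x,\mu_x)\,(\int q_t|m_t|^2\dd\mu_x)^{1/2}$. Talagrand \eqref{eq:talagrand} for $\mu_x$ bounds the first factor by $\sqrt{2\ent_x(q)/\rho}$. Talagrand for the standard Gaussian $\kappa$ gives $|m_t(x)|^2\le \mathcal W_2^2(h_x\kappa,\kappa)\le 2\ent_\kappa(h_x)$. Combining, $\frac{1}{\sqrt\rho}|A_t|\le \ent_x(q)+\ent_v(g)$. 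Once the decay $\mathcal H'\le -\lambda_\Gamma\sqrt\rho\,\mathcal H$ is established, Gr\"onwall together with this equivalence yields the prefactor $\frac{1+\theta}{1-\theta}$.

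The second step is to compute the dissipations. The Hamiltonian part preserves $\KL$. For the refreshment part, convexity gives
\[
\frac{\dd}{\dd t}\KL=\gamma\int(\Pi_v g-g)\log g\,\dd\mu\le-\gamma\,\ent_v(g).
\]
For $A_t$, the refreshment contributes exactly $-\gamma A_t$, since it sends $m_t$ to $0$ at rate $\gamma$. The transport part, i.e.\ $\frac12\frac{\dd^2}{\dd t^2}\mathcal W_2^2$ along the Hamiltonian flow, will be handled through the Kantorovich potential $\varphi_t$ with $x-T_t=\nabla\varphi_t$. Using the envelope theorem/superdifferentiability of $\frac12\mathcal W_2^2$, the time derivative of $T_t$ contributes no term of the wrong sign, and one should get
\[
\frac{\dd}{\dd t}A_t\le \int q_t\,\mathrm{tr}\big(\Sigma_t(x)(I-\nabla T_t)\big)\dd\mu_x-\int \nabla U\cdot(x-T_t)\,q_t\dd\mu_x,
\]
with $\Sigma_t(x)=\int v\otimes v\,h_x\dd\kappa$. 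Splitting $\Sigma_t=I+(\Sigma_t-I)$, the identity part combines with the $\nabla U$ term to give $\int q\,[\mathrm{tr}(I-\nabla T)-\nabla U\cdot(x-T)]\dd\mu_x$. Integrating by parts and using $K$-semiconvexity of $U$ (the ``above-tangent'' form of displacement convexity that underlies \eqref{eq:hwi}), this is at most $-\ent_x(q)+\frac K2\mathcal W_2^2\le-(1-\frac K\rho)\ent_x(q)$.

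The third step, and the one I expect to be the main obstacle, is to control the covariance defect $\int q\,\mathrm{tr}((\Sigma_t-I)(I-\nabla T_t))\dd\mu_x$ by $\ent_v(g)$ plus a small multiple of $\ent_x(q)$. The plan is to use the entropy variational inequality $\int v\otimes v:M\,(h_x-1)\dd\kappa\le \ent_\kappa(h_x)+\log\int e^{v\cdot Mv}\dd\kappa$ with $M=\epsilon(I-\nabla T)$. This requires $0\le \nabla T$ and an upper bound on $\nabla T$, hence a regularity (Caffarelli-type) estimate on the Brenier map along the flow. That is where the bounded higher derivatives of $U$ and the smoothness of $p_t$ will be used, first for nice initial data and then by approximation. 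With this in hand, the total becomes
\[
\mathcal H'\le -\gamma\ent_v-\frac{\theta\gamma}{\sqrt\rho}A_t-\theta(1-\tfrac K\rho)\ent_x+\theta C\ent_v .
\]
The $A_t$ term is absorbed by $\theta\Gamma(\ent_x+\ent_v)$. The constraints $\theta\le\Gamma/18$ and $\theta\le\frac{1}{4\Gamma}(1-\frac K\rho)$ are exactly what let $\gamma\ent_v$ and $\theta(1-\frac K\rho)\ent_x$ dominate these errors. This leaves $\mathcal H'\le -\frac\theta2(1-\frac K\rho)\sqrt\rho\,(\ent_x+\ent_v)\le-\lambda_\Gamma\sqrt\rho\,\mathcal H$, which concludes.
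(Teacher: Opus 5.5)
\textbf{Gap in the derivative of $A_t$.} Your overall architecture is the paper's, which is Lu's. Your $A_t$ is exactly $\coot(g_t)=\int j\cdot\xi_{q_t}\dd\mu_x$. Refreshment gives $-\gamma\ent_v$ and $-\gamma A_t$, semiconvexity plus Talagrand gives $-(1-\frac K\rho)\ent_x$, and Gr\"onwall closes the argument. One minor slip: the correct bound is $\sqrt\rho\,|A_t|\le\ent_x(q_t)+\ent_v(g_t)$, so the weight must be $\epsilon=\theta\sqrt\rho$, not $\theta/\sqrt\rho$; otherwise neither the constants $1\pm\theta$ nor the rate $\sqrt\rho$ come out.

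The genuine gap is in your estimate of $\frac{\dd}{\dd t}A_t$. First, your claim that $\partial_tT_t$ contributes nothing of the wrong sign is false. The envelope theorem removes $\partial_tT_t$ only from the \emph{first} derivative of $\frac12\mathcal W_2^2$. In $\frac{\dd}{\dd t}A_t$, the term $-\int\partial_tT_t\cdot j\,\dd\mu_x$ survives and is nonnegative. Semiconcavity (cyclical monotonicity along the flow) only bounds it above by $\int q\,\nabla T:(m\otimes m)\dd\mu_x$.

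Here is a concrete counterexample. Take $d=1$, $U=x^2/2$, and $(X,V)$ a centered Gaussian vector with $\mathrm{Var}\,X=\sigma^2$, $\mathrm{Var}\,V=\tau^2$ and $\mathrm{Cov}(X,V)=c\neq0$. Then $T(x)=x/\sigma$ and $A=(1-\frac1\sigma)c$. The Hamiltonian part gives
$\frac{\dd}{\dd t}A=(1-\frac1\sigma)(\tau^2-\sigma^2)+\frac{c^2}{\sigma^3}$.
Your right-hand side equals $(1-\frac1\sigma)(\tau^2-\sigma^2)$. The missing term is exactly $\int q\,T'm^2\dd\mu_x=c^2/\sigma^3>0$, so your displayed inequality fails.

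\textbf{Gap in Step 3.} Your Step 3 relies on a uniform upper bound on $\nabla T_t$, which is not available. Caffarelli-type contraction needs an upper bound on $-\nabla^2\log$ of the source density and strong log-concavity of the target. Here the source $q_t\mu_x$ is an arbitrary law produced by the flow, so $\nabla T_t$ is large wherever $q_t$ concentrates, and $U$ may be nonconvex.

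Even granting such a bound, your $C$ would depend on it. The hypothesis $\theta\le\Gamma/18$ instead encodes the absolute constant $6$ in the inequality the paper imports from Lu, $\frac{\dd}{\dd t}\coot\le-\gamma\coot+6\ent_v-\ent_x+\frac K2\mathcal W_2^2$, which uses no regularity of $T$. The paper's regularity step serves only to make $g_t$ a regular solution in Lu's sense, via propagation of weighted Schwartz bounds, since RHMC has no hypoelliptic smoothing. Moreover, the Donsker--Varadhan remainder with $M=\epsilon(I-\nabla T)$ is of order $\epsilon\int q|I-\nabla T|^2\dd\mu_x$, a second-order quantity of the potential that $\ent_x(q)$ does not control.

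\textbf{The missing idea.} Keep the $\partial_tT$ term together with the covariance term, and do not linearize $\log\det\nabla T\le\mathrm{tr}(\nabla T-I)$. One way to do this:
\begin{itemize}
\item The sum of the two terms is at most $\int q\,[\mathrm{tr}\,\Sigma-\nabla T:C_x-\nabla U\cdot\xi]\dd\mu_x$, where $C_x=\Sigma-m\otimes m$ is the conditional covariance.
\item Use the Monge--Amp\`ere identity $\log q=U-U\circ T+\log\det\nabla T$. This is where $U(T(x))-U(x)+\nabla U(x)\cdot(x-T(x))\ge-\frac K2|x-T(x)|^2$ enters.
\item Use the matrix inequality $\log\det\nabla T\le\nabla T:C_x-\log\det C_x-d$.
\item Use $|m|^2+\mathrm{tr}\,C_x-\log\det C_x-d\le2\ent_\kappa(h_x)$, since the Gaussian with the same first two moments minimizes relative entropy to $\kappa$.
\end{itemize}
This gives $\frac{\dd}{\dd t}A_t\le-\gamma A_t+2\ent_v-\ent_x+\frac K2\mathcal W_2^2$, an absolute constant, using only $\nabla T\ge0$. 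With that bound your closing computation goes through as in the paper.
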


Meanwhile, for ZZP and BPS, while existing results \cite{andrieu2021hypocoercivity, MRZ, lu2022explicit} proved their exponential convergence under other metrics, we show that such convergence cannot hold for relative entropy, even though the relative entropy decreases along both dynamics:
\begin{equation}\label{eq:ZZPKLdec}  \frac{\dd}{\dd t} \ent(g_t)=-\gamma  \int (g_t-\Pi_v g_t) \log g_t \dd \mu(x,v)-\sum_{k=1}^d\int (v_k\partial_k U)_+ g_t\Big( \frac{\mathcal{B}_kg_t}{g_t}-1-\log\frac{\mathcal{B}_kg_t}{g_t}\Big) \dd \mu \le 0, \end{equation}
\begin{equation}\label{eq:BPSKLdec} \frac{\dd}{\dd t} \ent(g_t)=-\gamma \int (g_t-\Pi_v g_t) \log g_t \dd \mu(x,v) -\int (v\cdot \nabla U)_+ g_t\Big( \frac{\mathcal{B}g_t}{g_t}-1-\log\frac{\mathcal{B}g_t}{g_t}\Big) \dd \mu \le 0, \end{equation}
Note that in Section \ref{sec:rhmc}, we prove that
\[ \int (g_t-\Pi_v g_t) \log g_t \dd \mu(x,v) \ge \ent_v(g_t):=\int q_t(x)\ent_{\kappa}(h_{t,x}) \dd \mu_x\ge 0. \]

In the next result, apart from relative entropy and Wasserstein distances, we also consider weighted total variation distances (a.k.a. $\mathcal V$-norms) defined as 
\begin{equation}
    \label{eq:def-Vnorm}
\|\nu - \nu'\|_{\mathcal V} := \sup\left\{ \int_{\R^{2d}}f \dd (\nu-\nu'),\, \|f/\mathcal V\|_\infty \leqslant 1\right\}\,,
\end{equation}
given some weight (or Lyapunov function) $\mathcal V :\R^d \rightarrow [1,\infty)$.

\begin{theorem}\label{thm:noquantconv}
    Let $p_t$ satisfies \eqref{eq:flowpt} with $\mathcal{L}$ given by ZZP or BPS. Suppose $\kappa$ is either supported on a compact subset of $\R^d$, or the standard Gaussian distribution on $\R^d$. Then, for $\mu_x$ being the standard Gaussian distribution, there does not exist an $\varepsilon>0$ and a function $\lambda(t)>0$ decreasing to zero, such that for all initial conditions $p_0$ with $\KL(p_0\|\mu)\le \varepsilon$,
    \begin{equation}\label{eq:KLdecay}
        \KL(p_t\|\mu)\le \lambda(t)\KL(p_0\|\mu).
    \end{equation}
    More precisely, we have for all $t\ge 0$ and $\varepsilon>0$
    \begin{equation}
        \label{eq:lowerboundsTheorem2KL}
        \sup_{\KL(p_0\|\mu) \leqslant \varepsilon}\frac{\KL(p_t\|\mu)}{\KL(p_0\|\mu)} = 1\,.
    \end{equation}
    In addition, for $\mathcal{W}_p$ distance with $p\in [1,\infty)$ and for $\mathcal V$-norms with $\mathcal V $ having at most a polynomial growth, we have for all $t\ge 0$ and $\varepsilon>0$
    \begin{equation}
        \label{eq:lowerboundsTheorem2}
    \sup_{\mathcal{W}_p(p_0,\mu) \leqslant \varepsilon}\frac{\mathcal{W}_p(p_t,\mu)}{\mathcal{W}_p(p_0,\mu)} \ge 1\,,\qquad \sup_{\|p_0-\mu\|_{\mathcal V} \leqslant \varepsilon}\frac{\|p_t-\mu\|_{\mathcal V}}{\|p_0-\mu\|_{\mathcal V}} \ge  1\,.
    \end{equation}
    For a weight $\mathcal V$ satisfying $\mathcal V(x,v) \leqslant C e^{C|x|^\delta} + C e^{C|v|^2}$   for some $C>0$ and $\delta<1$, 
    the second inequality is also true if $\kappa$ is compactly supported, or if $\kappa$ is the standard Gaussian and $\delta<2/3$.
\end{theorem}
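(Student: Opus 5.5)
The idea is to use initial conditions concentrated far out in the tails, at position $|x|\approx R$ with $R\to\infty$. In that region the dynamics of ZZP and BPS stay ``frozen'' for any fixed time $t$: the velocity is bounded, or Gaussian with light tails, so the position moves by $O(t)$. The only jump mechanisms are bounces, which preserve $|v|$ for BPS and flip signs coordinatewise for ZZP, and refreshments, which preserve the law $\kappa$. Crucially, a bounce cannot bring the particle back toward the bulk any faster than free transport does. Meanwhile the tail weight $\mu_x$ changes by a factor $e^{-|x|^2/2}$, whose relative variation over an $O(t)$ displacement is huge. So instead of the initial mass relaxing, it simply drifts, and the entropy cost $\log(1/\mu_x)$ of the displaced mass stays essentially the same.

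Concretely, for KL I take
\[
p_0=(1-\eta)\mu+\eta\,\nu_R,\qquad \nu_R=\mathcal N(x_R,\sigma^2)\otimes\kappa,\qquad x_R=Re_1,
\]
with $\eta$ small, so that $\KL(p_0\|\mu)\approx \eta R^2/2+\eta\log\eta+O(\eta)$. Choosing $\eta=\varepsilon/R^2$ keeps this at most $\varepsilon$.

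By linearity of the Kolmogorov flow, $p_t=(1-\eta)\mu+\eta\,\nu_R P_t$. The key lower bound is
\[
\KL(p_t\|\mu)\ge \eta\,(\nu_RP_t)(f)-\log\bigl(1-\eta+\eta\,\mu(e^{f})\bigr),
\]
which follows from the Donsker--Varadhan variational formula with a test function $f$. I would choose $f=\min(|x|^2/2,\,M)$, or a similar function adapted to $U=|x|^2/2$.

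Since $|X_t-X_0|\le \int_0^t|V_s|\dd s$, and $|V_s|$ is controlled, I get $(\nu_RP_t)(f)\ge R^2/2-CRt-C$. For BPS, $|V_s|$ is bounded by the initial speed between refreshments. For the Gaussian case, I use a union bound over the refreshment times to show $\E\sup_{s\le t}|V_s|\le C(t)$.

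It remains to check that the $\log$ term is negligible relative to $\eta R^2$. With $M$ chosen of order $R^2$, we have $\mu(e^{f})\approx M$ roughly; a milder truncation like $f=(1-\delta)|x|^2/2$ capped at $R^2$ makes $\mu(e^f)=O(1)$, and then the log term is $O(\eta)$. Taking the ratio gives
\[
\frac{\KL(p_t\|\mu)}{\KL(p_0\|\mu)}\ge 1-O(t/R)-O(\delta)-O(1/|\log\eta|),
\]
and sending $R\to\infty$ and $\delta\to0$ yields \eqref{eq:lowerboundsTheorem2KL}. The ratio is at most $1$ by the monotonicity \eqref{eq:ZZPKLdec}--\eqref{eq:BPSKLdec}.

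For $\mathcal W_p$ and the $\mathcal V$-norms I use the same mixtures. For $\mathcal W_p$, the 1-Lipschitz test function $|x|$ combined with Kantorovich duality gives $\mathcal W_p(p_t,\mu)\ge \mathcal W_1\ge \eta(R-Ct)-C\eta$, while $\mathcal W_p(p_0,\mu)\le (\eta)^{1/p}(R+C)$. Since these scale differently in $\eta$ when $p>1$, I would instead use a Dirac-like $\nu_R$ with $\eta=1$ and a small shift, or, more simply, compare to the lower bound
\[
\mathcal W_p(p_t,\mu)\ge \eta^{1/p}\bigl(R-Ct-o(R)\bigr),
\]
obtained from the fact that a mass $\eta$ sitting at distance about $R$ from where $\mu$ has mass must be transported that far. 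For $\mathcal V$-norms, I test against $f=\mathcal V\,\mathbf 1_{B}$ with $B$ a neighborhood of the displaced region, and use the fact that $\mathcal V(X_t,V_t)$ is comparable to $\mathcal V(X_0,V_0)$ on the event of bounded velocities.

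The main obstacle is the last claim with super-polynomial weights $e^{C|x|^\delta}$ and $e^{C|v|^2}$. There, $\mathcal V(x_R+O(t))/\mathcal V(x_R)$ must tend to $1$, which uses $\delta<1$ since $R^{\delta-1}t\to0$. In the Gaussian-velocity case, large velocities both move $x$ and affect the weight, so I must control $\E[e^{C|X_t|^\delta}]$ against the tail of $\kappa$. Placing the velocity part at moderate $|v|\sim R^{a}$ and balancing $R^{\delta}$ against $R^{2a}$ and the displacement $R^{a}t$ is where I expect the threshold $\delta<2/3$ to emerge.
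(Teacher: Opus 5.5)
Your far-away bumps $p_0=(1-\eta)\mu+\eta\nu_R$ take a genuinely different route from the paper for $\KL$ and $\mathcal W_p$. The paper fixes one heavy-tailed initial law, gets an algebraic lower bound on $\mathrm{TV}(p_T,\mu)$ from Lemma~\ref{lem:tail}, and concludes by contradiction via Lemma~\ref{lem:extrap} and \eqref{eq:Fdecayexpo}. You instead show directly that at fixed $t$ the ratio is $1-o(1)$ as $R\to\infty$. These two parts work, up to one error: your displayed Donsker--Varadhan bound is false as written, because it drops the mixing entropy. For $p=(1-\eta)\mu+\eta\,\mu(\cdot\,|A)$ with $\mu(A)\to0$ and $f=\log(1/\mu(A))\mathbf{1}_A$, your right-hand side exceeds $\KL(p\|\mu)$ by about $\eta\log(1/\eta)$. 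The correct statement is
\[
\KL(p_t\|\mu)\ \ge\ \eta\,\KL(\nu_RP_t\|\mu)-h(\eta),\qquad h(\eta):=-\eta\log\eta-(1-\eta)\log(1-\eta).
\]
It follows from $\eta\KL(\nu\|\mu)=\KL(p\|\mu)+\eta\KL(\nu\|p)+(1-\eta)\KL(\mu\|p)$ for $p=(1-\eta)\mu+\eta\nu$, together with $\dd\nu/\dd p\le\eta^{-1}$ and $\dd\mu/\dd p\le(1-\eta)^{-1}$. The rest then goes through:
\begin{itemize}
\item $\KL(\nu_RP_t\|\mu)\ge(1-\delta)\E|X_t|^2/2-\frac d2\log\frac1\delta$, with no truncation needed.
\item $\E|X_t|\ge R-t\,\E_\kappa|v|$, since $|V_s|$ keeps the law of $|v|$ under $\kappa$.
\item With $\eta=\varepsilon/R^2$, the extra term $h(\eta)/(\eta R^2)\approx\log(R^2/\varepsilon)/R^2$ is harmless.
\end{itemize}
For $\mathcal W_p$, your \emph{more simply} version is fine once you note that $\mu$ gives mass $\lesssim e^{-\theta^2R^2/2}\ll\eta=(\varepsilon/R)^p$ to $\{|x|>\theta R\}$. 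A mass $\approx\eta$ must therefore travel at least $(1-\theta)R-o(R)$. The $\eta=1$ alternative is not admissible, since then $\mathcal W_p(p_0,\mu)\approx R$.

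The genuine gap is the $\mathcal V$-norm part. The step \emph{$\mathcal V(X_t,V_t)$ is comparable to $\mathcal V(X_0,V_0)$ on bounded-velocity events} only holds for weights that vary slowly along trajectories. The theorem, however, covers every $\mathcal V\ge1$ of polynomial growth, including velocity-dependent ones. For BPS take $\mathcal V=1+|x|^k\mathbf{1}_{\{x\cdot v>0\}}$ (for ZZP, $\mathbf{1}_{\{x_1v_1>0\}}$). Near $|x|\approx R$, outward-moving particles are reflected at rate $(v\cdot x)_+$, i.e.\ $R$ times their radial speed. So for fixed $t>0$ only a fraction $o(1)$ of $\nu_RP_t$ is moving outward, giving $(\nu_RP_t)(\mathcal V)=o(R^k)$ while $\nu_R(\mathcal V)\sim R^k/2$. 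Along your family the ratio therefore tends to $0$, not $1$. The super-polynomial case is, moreover, only conjectured in your last paragraph.

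The missing idea is the paper's: never compare $\mathcal V$ along the flow; use only $\|\cdot\|_{\mathcal V}\ge\mathrm{TV}$ and argue by contradiction with \eqref{eq:Fdecayexpo}. With your bumps this is short:
\begin{itemize}
\item Suppose the supremum at time $t_*$ were $\lambda<1$. Apply \eqref{eq:Fdecayexpo} to $(1-\eta)\mu+\eta\nu_R$ and divide by $\eta$ to get $\mathrm{TV}(\nu_RP_{nt_*},\mu)\le\lambda^n\|\nu_R-\mu\|_{\mathcal V}$ for all $n$.
\item For $nt_*\le cR$ with $c$ small, Markov's inequality on $\int_0^{nt_*}|V_s|\dd s$ (mean $nt_*\E_\kappa|v|$) keeps most of $\nu_RP_{nt_*}$ in $\{|x|>R/4\}$. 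So this TV stays $\ge1/2$.
\item This is a contradiction as $R\to\infty$ whenever $\log\|\nu_R-\mu\|_{\mathcal V}=o(R)$. That covers polynomial weights and $e^{C|x|^\delta}$, $\delta<1$, plus a $\kappa$-integrable function of $v$, for compact and Gaussian $\kappa$ alike.
\end{itemize}
No balancing is needed, and no threshold $2/3$ will emerge. In the paper that threshold is an artifact of bounding $\pp(\sup_{[0,T]}|V_t|>v_*)$ for one fixed initial law over long times.
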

\begin{remark}
    From our negative result in Theorem \ref{thm:noquantconv}, we see that it is impossible to develop a unified entropic convergence framework for second-lifts of overdamped Langevin dynamics, unlike in the $L^2$ setting \cite{EberleLift, brigati2025hypocoercivity}.  
\end{remark}
 In Section \ref{sec:nonconvpdmp}, we will present some counterexamples that show why quantitative exponential convergence results cannot hold. We also compare our negative result with previous ones which, under the settings of Theorem~\ref{thm:noquantconv}, show an exponential decay either in $L^2$ norm (i.e. $\chi^2$ divergence) or $\mathcal V$-norms with Lyapunov functions $\mathcal V$ having an exponential growth. The fact that these metrics have significantly different behavior than the $\mathrm{KL}$ or $\mathcal W_p$ distances (in particular a different sensitivity to the tails) is related to a known topic in the analysis of MCMC methods: some result require warm start, i.e. assume that the initial distribution has a finite $\chi^2$ divergence with respect to the equilibrium (or a similar condition, for instance for some other R\'enyi divergence), and may fail otherwise. Theorem~\ref{thm:noquantconv} shows that this is the case for exponential convergence for the BPS and ZZP. We refer to \cite{2026arXiv260109019B,ZhaAltChe26HMC} for further details and references on this topic.

\section{Proof of Theorem \ref{thm:rhmc}: Randomized HMC}\label{sec:rhmc}
Let us explicitly write out the PDE satisfied by $g_t$:
\begin{equation}\label{eq:RHMC} \partial_t g_t + v\cdot \nabla_x g_t - \nabla U \cdot \nabla_v g_t = \gamma (\Pi_v g_t -g_t).\end{equation}
Our proof of Theorem \ref{thm:rhmc} uses the approach of \cite{lu2026sharp}. We will highlight the differences of the proof. We remind the readers that
\[\KL(p_t\|\mu) = \ent(g_t) = \ent_x(q_t) + \ent_v(g_t).\]

We first obtain the proof for regular finite-entropy solutions of \eqref{eq:RHMC}, in the sense of \cite[Definition 2.5]{lu2026sharp}. Let us start with entropy dissipation
\begin{equation}\label{eq:entdiss}
    \frac{\dd}{\dd t} \ent(g_t)= \gamma\int (\Pi_v g_t-g_t) \log g_t \dd \mu(x,v) =: -\gamma \mathcal{D}(g_t).
\end{equation}
We claim that
\[ \mathcal{D}(g) \ge \ent_v(g). \]
This is equivalent to
\[ \int (g-\Pi_v g) \log g \dd \mu \ge \int g\log g\dd \mu-\int \Pi_v g \log \Pi_v g \dd \mu_x, \] or \[ \int \Pi_v g \log \Pi_v g \dd \mu_x \ge \int \Pi_v g \log g \dd \mu = \int \Pi_v g(\log \Pi_v g + \log h_x(v)) \dd \mu,\] which is true since by Jensen's inequality,\[ \int \Pi_v g \log h_x(v) \dd \mu = \int \Pi_v g \Big(\int \log h_x(v) \dd \kappa(v) \Big)\dd \mu_x \le \int \Pi_v g \log\Big(\int  h_x(v) \dd \kappa(v) \Big)\dd \mu_x=0. \]

An important element of the approach in \cite{lu2026sharp} is the modified energy \[ \mathcal{H}_\epsilon(g):= \ent(g_t)+\epsilon \coot(g_t),\]where $\coot(g):= \int j(x) \cdot \xi_q(x) \dd \mu_x$, $j:=\Pi_v (vg)$ is the flux, and $\xi_q(x)=x-T_q(x)$, where $T_q(x)$ is the optimal transport map from $q\mu_x$ to $\mu_x$. Testing \eqref{eq:RHMC} against 1 and $v$ with respect to $L^2(\kappa)$, and we get the continuity equations in the $x$ variable:
\begin{equation}\label{eq:conteqx}
    \partial_t q =\nabla_x^* j, \quad \partial_t j = -\nabla_x q + \nabla_x^* (M-q\mathrm{Id})-\gamma j,
\end{equation}  where $M=\Pi_v(v\otimes v g)$. Since these equations are identical to the case of \cite{lu2026sharp}, the entire argument of \cite{lu2026sharp} passes through when $U$ is convex, with $\mathcal{D}(g)$ playing the role of $\frac{1}{2}I_v(g)$. 

\smallskip

We nevertheless present the rest of the proof when $U$ is not convex but allows a small nonconvexity in the sense of $\nabla^2 U \ge -K \mathrm{Id}$, and treat the convex case as a special case when $K=0$. Then, we have instead
\[ \nabla U(x) \cdot (x-T (x))-U(x)+U(T(x)) \ge  -\frac{K}{2}|x-T(x)|^2,\] the integration of which against $q\dd \mu_x$ is exactly $-\frac{K}{2}\mathcal{W}_2^2(q\mu_x,\mu_x)$. Substituting this, \cite[Lemma 5.3]{lu2026sharp} and \cite[Lemma 3.1]{lu2026sharp} into \cite[(4.10)]{lu2026sharp} then yields
\begin{align*}\frac{\dd}{\dd t}\coot(g_t) & \le  -\gamma \coot(g_t) +6\ent_v(g_t)-\ent_x(q_t) + \frac{K}{2}\mathcal{W}_2^2(q\mu_x,\mu_x) \\ & \le -\gamma \coot(g_t) +6\ent_v(g_t)-\ent_x(q_t) + \frac{K}{\rho}\ent_x(g_t). \end{align*}
Here we used Talagrand inequality \eqref{eq:talagrand} in the latter step. Combined with \eqref{eq:entdiss}, we arrive at
\begin{align*}
    \frac{\dd}{\dd t}\mathcal{H}_\epsilon(g_t) & \le -\gamma \mathcal{D}(g_t) -\gamma\epsilon \coot(g_t) + 6\epsilon \ent_v(g_t) -\po 1-\frac{K}{\rho}\pf\epsilon \ent_x(g_t) 
    \\ & \le \gamma\epsilon \sqrt{\frac{4}{\rho}\ent_v(g_t)\ent_x(q_t)} - (\gamma-6\epsilon )\ent_v(g_t) -\po 1-\frac{K}{\rho}\pf\epsilon \ent_x(g_t),
\end{align*}
where in the last step we used \cite[Lemma 3.2]{lu2026sharp} to bound the $\coot(g_t)$ term. Writing $\gamma = \Gamma \sqrt{\rho}$ and $\epsilon = \theta \sqrt{\rho}$, and we have
\begin{align*}
     \frac{\dd}{\dd t}\mathcal{H}_\epsilon(g_t) & \le \sqrt{\rho}\Big(2\Gamma \theta \sqrt{\ent_v(g_t)\ent_x(q_t)}- (\Gamma-6\theta)\ent_v(g_t) -\po 1-\frac{K}{\rho}\pf\theta \ent_x(g_t)\Big) \\ & \le -\sqrt{\rho}\Big(\sqrt{1-\frac{K}{\rho}}\sqrt{\Gamma\theta\ent_v(g_t)\ent_x(q_t)}-\frac{2\Gamma}{3}\ent_v(g_t) - \po 1-\frac{K}{\rho}\pf\theta \ent_x(g_t)\Big) \\ & \le -\sqrt{\rho}\Big(-\frac{\Gamma}{6}\ent_v(g_t) - \po 1-\frac{K}{\rho}\pf\frac{\theta}{2} \ent_x(g_t)\Big) \\ & \le -\sqrt{\rho}\po 1-\frac{K}{\rho}\pf\frac{\theta}{2}\ent(g_t) \le -\sqrt{\rho}\po 1-\frac{K}{\rho}\pf\frac{\theta}{2(1+\theta)}\mathcal{H}_\epsilon(g_t).
\end{align*}
Here in the second line we used that $\theta \le \min \{ \frac{\Gamma}{18}, \frac{1}{4\Gamma}( 1-K/\rho) \}$. We finish the proof for regular finite-entropy solutions using a Gr\"onwall argument and the equivalence of $\mathcal{H}_\epsilon$ and $\KL(p_t\|\mu)$ by \cite[Lemma 3.2]{lu2026sharp}. 

\medskip

It remains to remove the regularity requirements of the solution, for which we again roughly follow the proof in \cite[Section 7]{lu2026sharp}, with the major difference that short-time hypoelliptic regularization in the sense of \cite{herau2004isotropic} no longer holds for RHMC due to the velocity diffusion operator being replaced by $\gamma(\Pi_v-\mathrm{Id})$. Fix any smooth and compactly supported $C^\infty$ initial condition $g_0$ with $g_0\ge 0, \int g_0\dd\mu=1$, and fix $\zeta\in (0,1)$. Consider the solution of \eqref{eq:RHMC} with initial condition $\tilde g_0=\zeta + (1-\zeta)g_0$, then we have by linearity $\tilde{g}_t = \zeta + (1-\zeta)g_t$. Since $\mu$ is invariant, we get $\zeta  \leqslant \tilde g_t \leqslant \zeta + (1-\zeta)\|g_0\|_\infty$ for all $t\geqslant 0$. We denote $\tilde{p}_t:= \tilde{g}_t\mu$. To show that $\tilde g_t$ is a regular solution, we consider $f_t := \exp(\frac{U(x)}{2}+\frac{|v|^2}{4})g_t$, then $f_0$ is smooth and compactly supported, and $f_t$ satisfies \begin{equation}\label{eq:halfweight}  \partial_t f_t = -v\cdot\nabla_x f_t +\nabla U \cdot \nabla_v f_t + \gamma\Big(\sqrt{\kappa(v)}\int f_t \dd \sqrt{\kappa(v)}-f_t\Big).\end{equation} Define \[M_{N,m}(f_t) = \max_{|\alpha|+|\beta|\le m}\sup_{x,v\in \R^d}(1+|x|+|v|)^N |\partial_x^\alpha\partial_v^\beta f_t(x,v)|.\] Thanks to Assumption \ref{ass:JLass1}, $\nabla U$ is Lipschitz in $\R^d$ and has all bounded derivatives, therefore by standard ODE theory, the particle flow $\Phi_t$ generated by Hamiltonian transport $-v\cdot \nabla_x + \nabla U \cdot \nabla_v$ is well-posed and smooth with all derivatives uniformly bounded in any finite time interval, and hence $M_{N,m} \big(f_0(\Phi_{-t}(x,v))\big)$ cannot grow more than exponentially fast in $t$. Meanwhile, the projection operator $\tilde{\Pi}_v f :=\sqrt{\kappa(v)}\int f \dd \sqrt{\kappa(v)} $ satisfies 
\[\partial_x^\alpha \partial_v^\beta \tilde{\Pi}_v f = \partial_v^\beta \sqrt{\kappa(v)} \int \partial_x^\alpha f \dd \sqrt{\kappa(v)},\] and since $\partial_v^\beta \sqrt{\kappa(v)} $ is a polynomial multiplied by Gaussian, we observe that $\tilde{\Pi}_v$ is also bounded in the Schwartz space. This shows that the generator of \eqref{eq:halfweight} leaves the Schwartz space invariant, and by a Gr\"onwall argument, for any $N,m$ we can find some $\Lambda_{N,m}$ such that  
\[M_{N,m}(f_t) \le \exp(\Lambda_{N,m} t) M_{N,m}(f_0),\] which gives the same desired tail bound similar to \cite[Lemma 7.1]{lu2026sharp}. Hence, following the proof of \cite[Proposition 7.2]{lu2026sharp} (in particular, the continuity equations for the $x$-marginals \eqref{eq:conteqx} are identical), we observe that $\{\tilde g_{t}\}$ is regular on each interval $[0,T]$. This allows us to bound
\[\KL(\tilde p_t\|\mu) \leqslant  \frac{1+\theta}{1-\theta}\exp(-\lambda_\Gamma\sqrt{\rho}t)\KL(\tilde p_0\|\mu) \leqslant  \frac{1+\theta}{1-\theta}\exp(-\lambda_\Gamma\sqrt{\rho}t)\KL(p_0\|\mu)\,.\]
The rhs is independent of $\zeta$, and by lower semi-continuity of $\KL$ by letting $\zeta\rightarrow 0$ we can replace the lhs of above by $\KL(p_t\|\mu)$. We hence proved Theorem \ref{thm:rhmc} for any smooth and compactly supported initial data $g_0$. To prove the theorem for general initial condition with finite entropy, we resort to the approximation argument in \cite[Lemma 7.3]{lu2026sharp}. 


\section{Proof of Theorem \ref{thm:noquantconv}: No Quantitative Entropic Convergence} \label{sec:nonconvpdmp}

Throughout, we assume the target distribution is $\dd \mu_x = (2\pi)^{-\frac{d}{2}}\exp(-\frac{|x|^2}{2}) \dd x$, although the proof can easily be generalized to arbitrary $\mu_x$. 
We remind the readers that both ZZP and BPS are stochastic processes $(X_t,V_t)$ with $\frac{\dd}{\dd t}X_t=V_t$. For all our constructions, we assume $p_0 = q_0\mu_x \otimes \kappa$ so that, in particular, $\KL(p_0\|\mu) = \KL(q_0\mu_x\|\mu_x)$.

\subsection{Preliminary lemmas}

Let us first notice that a bound of the form~\eqref{eq:KLdecay} with any $\lambda(t)$ going to zero as $t\rightarrow \infty$ in fact implies that the bound actually hold with $\lambda(t)$ decaying exponentially fast:

\begin{lemma}\label{lem:extrap}
    Let $p_t$ be the solution of some flow such that $\KL(p_t\|\mu)$ is nonincreasing. Suppose there exists some $\delta \in (0,1)$ and some $t_*>0$ such that for all initial conditions $p_0$ with $\KL(p_0\|\mu)<\infty$, we have decay estimate    \begin{equation}\label{eq:KLdecayinit}
      \KL(p_{t_*}\|\mu) \le \delta\KL(p_0\|\mu),  
    \end{equation}
    then, we have for all $t\ge 0$,
    \begin{equation}\label{eq:KLdecaygen}
        \KL(p_{t}\|\mu) \le \delta^{-1}\exp\Big(-\frac{t}{t_*}\log \frac{1}{\delta} \Big) \KL(p_0\|\mu),
    \end{equation}
\end{lemma}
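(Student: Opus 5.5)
The plan is to iterate the one-step decay \eqref{eq:KLdecayinit} along the semigroup (Markov) property of the flow, and then fill in the intermediate times using the monotonicity of $t\mapsto \KL(p_t\|\mu)$. The only structural input is that the flow is time-homogeneous: the law at time $t_*+s$ is the law at time $t_*$ of the flow started from $p_s$. This holds for the Markov processes considered here. Moreover, if $\KL(p_0\|\mu)<\infty$, then $\KL(p_s\|\mu)\le \KL(p_0\|\mu)<\infty$ by monotonicity, so the hypothesis can be applied with $p_s$ as the initial condition. If $\KL(p_0\|\mu)=\infty$ there is nothing to prove.

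\textbf{Step 1 (iteration).} By induction on $n\in\mathbb N$, applying \eqref{eq:KLdecayinit} to the initial condition $p_{(n-1)t_*}$ gives
\[
\KL(p_{nt_*}\|\mu)\le \delta\, \KL(p_{(n-1)t_*}\|\mu)\le \dots \le \delta^n \KL(p_0\|\mu).
\]

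\textbf{Step 2 (interpolation).} For general $t\ge 0$, let $n=\lfloor t/t_*\rfloor$, so that $n > t/t_*-1$. Since $\KL(p_t\|\mu)$ is nonincreasing and $t\ge nt_*$,
\[
\KL(p_t\|\mu)\le \KL(p_{nt_*}\|\mu)\le \delta^{n}\KL(p_0\|\mu).
\]
Because $\delta\in(0,1)$, the map $s\mapsto\delta^s$ is decreasing, so $\delta^n\le \delta^{t/t_*-1}=\delta^{-1}\exp\big(-\frac{t}{t_*}\log\frac1\delta\big)$. This is exactly \eqref{eq:KLdecaygen}.

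There is no real obstacle here. The only point needing care is justifying the restart at time $nt_*$, namely the semigroup property together with finiteness of the entropy at intermediate times; both follow directly from the stated hypotheses.
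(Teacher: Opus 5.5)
Your proposal is correct and follows the paper's proof: write $t=nt_*+s$ with $s\in[0,t_*)$, iterate the one-step bound to get $\delta^n$, use monotonicity to pass from $nt_*$ to $t$, and bound $\delta^n\le\delta^{t/t_*-1}$. You also state explicitly the restart justification (time-homogeneity, and finiteness of $\KL(p_s\|\mu)$ by monotonicity), which the paper's proof leaves implicit.
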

\begin{proof}
    For arbitrary $t>0$, we can find some non-negative integer $n$ and some $s\in [0,t_*)$ such that $t=nt_*+s$, then we can apply \eqref{eq:KLdecayinit} repeatedly and obtain
    \[ \KL(p_t\|\mu) \le \KL(p_{nt_*}\|\mu) \le \delta^{n}\KL(p_0\|\mu) \le \delta^{\frac{t}{t_*}-1}\KL(p_0\|\mu),\] which is \eqref{eq:KLdecaygen}. 
\end{proof}

 Concerning the inequalities~\eqref{eq:lowerboundsTheorem2}, in general $\mathcal W_p(p_t,\mu)$ and $\|p_t - \mu\|_{\mathcal V}$ are not decreasing in $t$, but for any function $\mathcal F:\mathcal P(\R^d)\setminus\{\mu\} \rightarrow (0,\infty]$,
 \begin{equation}
     \label{eq:Fdecayexpo}
 \exists t_*,\varepsilon>0,\, \lambda_*:= \sup_{\mathcal F(p_0)\leqslant \varepsilon} \frac{\mathcal F(p_t)}{\mathcal F(p_0)}<1 \quad \Rightarrow\quad \mathcal F(p_{nt_*}) \leqslant \lambda^n \mathcal F(p_0)\,\forall n\in  \mathbb N,\ p_0 \in \mathcal P(\R^d)\ \text{with}\ \mathcal F(p_0) \leqslant \varepsilon\,.
 \end{equation}
 
Thanks to this first comment, we see that the inequalities~\eqref{eq:lowerboundsTheorem2KL} and \eqref{eq:lowerboundsTheorem2} will be proven if we can find initial distributions (arbitrarily close to $\mu$ in terms of $\mathrm{KL}$, $\mathcal W_p$ or $\|\cdot\|_{\mathcal V}$) for which the long-time decay of the corresponding metric is sub-exponential. To lower-bound these quantities  in terms of the tail  of $p_0$, using that, for any $\mathcal V\geqslant 1$, $p\geqslant 1$ and $\nu \in\mathcal P(\R^d)$,
\[\KL(\nu\|\mu) \geqslant \mathrm{TV}^2(\nu\| \mu) \,,\qquad \|\nu-\mu\|_{\mathcal V} \geqslant \mathrm{TV}(\nu\| \mu)\,,\qquad \mathcal W_p(\nu,\mu) \geqslant \mathcal W_1(\nu,\mu)\,,   \]
we will rely on the following lemma.  
\begin{lemma}\label{lem:tail}
    Let $p_t$ be the solution of \eqref{eq:flowpt} with $\mathcal{L}$ given by ZZP or BPS. Then for all $T, \, v_*>0$, we have estimates 
    \begin{equation}\label{eq:KLgtrtail}
        \mathrm{TV}(p_T\| \mu) \ge  \pp_{q_0\mu_x}(|X_0|>2Tv_*)-\pp\big(\sup_{t\in[0,T]} |V_t|>v_*\big)-\pp_{\mu_x}(|x|>Tv_*),
    \end{equation}
    \begin{equation}\label{eq:W1gtrtail}
        \mathcal W_1(p_T,\mu) \ge \pp_{q_0\mu_x}(|X_0|>2Tv_*+1) - \pp_{\mu_x}(|x|>Tv_*)-\pp\big(\sup_{t\in[0,T]} |V_t|>v_*\big) \,.
    \end{equation}
\end{lemma}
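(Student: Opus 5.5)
The idea is that position moves only at the speed of the velocity. If the velocity stays below $v_*$ on $[0,T]$, then a particle starting at $|X_0|>2Tv_*$ is still at distance more than $Tv_*$ from the origin at time $T$. In contrast, $\mu_x$ gives little mass to $\{|x|>Tv_*\}$. I would therefore test against the event $A:=\{(x,v):|x|>Tv_*\}$ for total variation, and against a $1$-Lipschitz cutoff function for $\mathcal W_1$.

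\emph{Total variation.} I use $\mathrm{TV}(p_T\|\mu)\ge p_T(A)-\mu(A)$, where $\mu(A)=\pp_{\mu_x}(|x|>Tv_*)$. Since $\dot X_t=V_t$, we have $|X_T-X_0|\le \int_0^T|V_t|\dd t$. On the event $\{|X_0|>2Tv_*\}\cap\{\sup_{t\le T}|V_t|\le v_*\}$ this gives $|X_T|\ge |X_0|-Tv_*>Tv_*$, so $(X_T,V_T)\in A$. A union bound then yields
\[
p_T(A)\ \ge\ \pp(|X_0|>2Tv_*)-\pp\big(\sup_{t\in[0,T]}|V_t|>v_*\big).
\]
Because the $x$-marginal of $p_0$ is $q_0\mu_x$, this is exactly \eqref{eq:KLgtrtail}.

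\emph{Wasserstein.} I use Kantorovich--Rubinstein duality, $\mathcal W_1(p_T,\mu)\ge \int f\dd p_T-\int f\dd\mu$ for every $1$-Lipschitz $f$ on $\R^{2d}$. I take $f(x,v)=\min\{1,(|x|-Tv_*)_+\}$. This function is $1$-Lipschitz, takes values in $[0,1]$, vanishes for $|x|\le Tv_*$, and equals $1$ for $|x|\ge Tv_*+1$. It follows that $\int f\dd\mu\le \pp_{\mu_x}(|x|>Tv_*)$ and $\int f\dd p_T\ge \pp(|X_T|\ge Tv_*+1)$. Repeating the displacement argument with $|X_0|>2Tv_*+1$ gives $|X_T|>Tv_*+1$ on the good velocity event, and \eqref{eq:W1gtrtail} follows.

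There is no serious obstacle. The only point that needs care is that the process trajectories are genuinely absolutely continuous, with $X_T-X_0=\int_0^T V_t\dd t$ pathwise. This holds for both ZZP and BPS, since velocity jumps do not move the position. The probabilities $\pp(\cdot)$ refer to the process started from $p_0$.
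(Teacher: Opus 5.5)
Your proposal is correct and follows essentially the same route as the paper. Testing the event $\{|x|>Tv_*\}$ directly is equivalent to the paper's marginalization step $\mathrm{TV}(p_T\|\mu)\ge \mathrm{TV}(q_T\mu_x\|\mu_x)$, the displacement/union-bound inclusion is identical, and your cutoff $\min\{1,(|x|-Tv_*)_+\}$ is exactly the paper's piecewise-affine $1$-Lipschitz function of $|x|$ used in the Kantorovich--Rubinstein dual.
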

\begin{proof}
    The important estimate for \eqref{eq:KLgtrtail} is that total variation distance does not increase after marginalization: 
\begin{align*} \mathrm{TV}(p_T\| \mu) \ge \mathrm{TV}(q_T\mu_x\|\mu_x) & \ge \pp(|X_T|>Tv_*)-\pp_{\mu_x}(|x|>Tv_*).\end{align*}
From $\{|X_0|>2Tv_*\} \subseteq \{|X_T|>Tv_*\} \cup \{\sup_{t\in[0,T]}|V_t|> v_* \}$ (if the process started far, either it is still far at time $T$, or it got a high velocity at some point), we get
\begin{align*}
    \pp(|X_T|>Tv_*) 
    & \ge \pp_{q_0\mu_x}(|X_0|>2Tv_*)-\pp\po\sup_{t\in[0,T]} |V_t|>v_*\pf,
\end{align*}
which finishes the proof of \eqref{eq:KLgtrtail}. 
The proof for \eqref{eq:W1gtrtail} is similar: consider a function $f:\R\rightarrow \R$ which is zero in $[-Tv_*,Tv_*]$, $1$ outside $[-Tv_*-1,Tv_*+1]$ and affine in between, then $f$ is 1-Lipschitz, which allows us to apply the dual formulation of $\mathcal{W}_1$ distance:
\begin{align*}\mathcal W_1(p_T,\mu) &\geqslant \mathbb E(f(|X_T|)) - \E_{\mu_x}f(|x|) \\ & \geqslant \mathbb P(|X_T|>Tv_*+1) - \pp_{\mu_x}(|x|>Tv_*) \\ & \ge \pp_{q_0\mu_x}(|X_0|>2Tv_*+1) - \pp_{\mu_x}(|x|>Tv_*) - \pp\po\sup_{t\in[0,T]} |V_t|>v_*\pf\,.\end{align*}
\end{proof}
In our counterexamples, $q_0\mu_x$ has substantially heavier tail than $\mu_x$, thus the rhs of both \eqref{eq:KLgtrtail} and \eqref{eq:W1gtrtail} are strictly positive for any sufficiently large $T$, and asymptotically dominated by the term depending on $q_0\mu_x$.

\subsection{Proof of Theorem \ref{thm:noquantconv}: Bounded Velocity}\label{subsec:bddv}

As discussed with~\eqref{eq:Fdecayexpo}, in order to prove Theorem~\ref{thm:noquantconv}, it is sufficient to exhibit suitable counter-examples for which long-time convergence is at most algebraic. Let us first look at the proof when the support of $\kappa$ is bounded, and we use $v_*$ to denote the supremum of $|v|$ in the support of $\kappa$, which allows us to apply Lemma \ref{lem:tail} with $\pp\big(\sup_{t\in[0,T]} |V_t|>v_*\big)=0$. 

\begin{example}\label{ex:fattail}
Let us take $q_0\mu_x \, \propto \,\bangle{x}^{-(d+\beta)}$ where $\bangle{x}:=\sqrt{1+|x|^2}$. We note that \[\KL(q_0\mu_x\|\mu_x)=\int \bangle{x}^{-(d+\beta)}\Big(\frac{|x|^2}{2}-(d+\beta)\log\bangle{x}\Big)\dd x ,\] hence the sufficient and necessary condition for $\KL(q_0\mu_x\|\mu_x)<\infty$ is $\beta>2$. We can estimate the tail of $\mu_x$ as \begin{equation}\label{eq:muxtail}\pp_{\mu_x} (|x|> Tv_*) \sim \int_{Tv_*}^\infty r^{d-1}\exp(-\frac{r^2}{2})\dd r \sim (Tv_*)^{d-2}\exp\po-\frac{(Tv_*)^2}{2}\pf.\end{equation}
Meanwhile, 
\begin{equation}\label{eq:p0tail}\pp_{q_0\mu_x}(|X_0|>2Tv_*)\sim \int_{2Tv_*}^\infty r^{d-1}r^{-(d+\beta)}\dd r \sim (2Tv_*)^{-\beta},\end{equation} which decays to zero much slower than $\pp_{\mu_x} (|x|> Tv_*) $ as $T\to\infty$. Therefore, we may deduce from Pinsker's inequality and \eqref{eq:KLgtrtail} that there exists some constant $C$ such that for all $T>0$,
\begin{equation}\label{eq: kllowerbd} \KL(p_T\|\mu) \ge  \mathrm{TV}^2(p_T,\mu)\ge C(1+T)^{-2\beta}.\end{equation}
At this point, we can already prove that there is no time $t\geqslant 0$ such that \eqref{eq:KLdecay} holds uniformly over all initial distributions $p_0$ with some $\lambda(t)<1$. Indeed, otherwise, we can apply Lemma \ref{lem:extrap} with  $t_*=t$, $\delta = \lambda(t_*)$. Hence $s\mapsto \KL(p_s\|\mu)$ must decrease exponentially fast, which is a contradiction to \eqref{eq: kllowerbd}. 

\smallskip

This example can be slightly modified to show that a quantitative convergence result in the form of \eqref{eq:KLdecay} cannot be held locally in the sense that we restrict it to initial conditions with $\KL(p_0\|\mu)\le \varepsilon$ for some small $\varepsilon>0$. Indeed, let us consider a family of initial conditions $p_0^\alpha := (1-\alpha)\mu + \alpha p_0$, with $\alpha\in (0,1]$. Then by Jensen's inequality, we have 
\[ \KL(p_0^\alpha\|\mu) \le (1-\alpha)\KL(\mu\|\mu) + \alpha \KL(p_0\|\mu) = \alpha \KL(p_0\|\mu),\]
 which can be made arbitrarily small as $\alpha \rightarrow 0_+$. On the other hand, notice that
 \[\pp_{q_0^\alpha \mu_x}(|X_0|>2Tv_*) \ge \alpha \pp_{q_0\mu_x}(|X_0|>2Tv_*) \sim \alpha(2Tv_*)^{-\beta}, \] which, using again \eqref{eq:KLgtrtail}, leads to 
 \[ \KL(p_T\|\mu)\ge C\alpha^2 (1+T)^{-2\beta}\] for $T$ sufficiently large. We obtain  $\sup_{\KL(p_0\|\mu) \leqslant \varepsilon}\frac{\KL(p_t\|\mu)}{\KL(p_0\|\mu)} = 1\,$ as $\KL(p_t\|\mu)$ is nonincreasing thanks to \eqref{eq:ZZPKLdec} and \eqref{eq:BPSKLdec}.

 \smallskip

The same argument for $\mathcal{W}_1$ also holds, by substituting \eqref{eq:p0tail} and \eqref{eq:muxtail} into \eqref{eq:W1gtrtail} and then using directly
\[\mathcal W_1(p_{nt_*},\mu) \leqslant \lambda(t_*)^n \mathcal W_1(p_0,\mu). \] 
To prove Theorem \ref{thm:noquantconv} for general $\mathcal{W}_p$, we use $\mathcal W_p(p_T,\mu) \geqslant \mathcal W_1(p_T,\mu)$ for all $p\geqslant 1$, so the same argument holds for $\mathcal{W}_p$ as long as we adjust $\beta$ such that $q_0\mu_x \in \mathcal{P}_p(\R^d)$. To see the dependency with respect to the initial condition, we can also use that
\[\mathcal W_p^p((1-\alpha)\mu+\alpha q\mu,\mu) \leqslant \alpha \mathcal W_p^p (q\mu,\mu)\,.  \] 

Finally, for $\mathcal V$-norms with $\mathcal V$ having at most polynomial growth, we see that this counter-example satisfies $\|p_0-\mu\|_{\mathcal V}<\infty$ when $\beta$ is large enough, and moreover
\[\|(1-\alpha)\mu + \alpha p_0 - \mu\|_{\mathcal V} = \alpha \|\mu - p_0\|_{\mathcal V}\,.\]
As for $\mathcal W_p$, this shows  that we can find an initial condition arbitrarily close to $\mu$ in terms of $\mathcal V$-norm with an algebraic lower-bound on $\|p_t-\mu\|_{\mathcal V}$, which proves the second inequality in~\eqref{eq:lowerboundsTheorem2}.
\end{example}

\begin{example}\label{ex:strexp}
    One can also prove Theorem \ref{thm:noquantconv} using stretched exponential initial conditions, which has arbitrary algebraic moments. Let $q_0\mu_x \sim \exp(-\bangle{x}^\delta)$ with $\delta\in (0,1)$, then we can estimate its tail
    \begin{equation}\label{eq:strexptail}
        \pp_{q_0\mu_x}(|X_0|>2Tv_*)\sim \int_{2Tv_*}^\infty r^{d-1}\exp(-r^\delta)\dd r \sim (2Tv_*)^{d-\delta} \exp(-(2Tv_*)^\delta),
    \end{equation} 
    which again decays subexponentially and much slower than $\pp_{\mu_x} (|x|> Tv_*) $ as $T\to\infty$. Hence the rest of the arguments in Example \ref{ex:fattail} still apply. This shows that an exponential moment in initial condition is necessary to get a long-time exponential convergence. In particular, reasoning as in the previous example, this shows that, for compactly supported velocities, the second inequality in~\eqref{eq:lowerboundsTheorem2} holds for any $\mathcal V$ with a stretched exponential bound.
\end{example}

\subsection{Proof of Theorem \ref{thm:noquantconv}: Gaussian Velocity}
While in this case, the velocity is no longer uniformly bounded for all times, $|V_t|$ never changes for straight line motions or bounces, and redraws from $\kappa$ with constant rate $\gamma$. Therefore, one may use Gaussian concentration inequality to show that seeing a velocity of large magnitude is extremely unlikely, so that our proof essentially reduces to the bounded velocity case. In fact, in view of Lemma \ref{lem:tail}, one only needs to control $\pp\big(\sup_{t\in[0,T]} |V_t|>v_*\big)$ for a suitable $v_*$ which depends on $T$.

\begin{lemma}\label{lem:Gaussiantail}
    Let $V_t$ be the velocity process for either ZZP or BPS, such that with rate $\gamma$, the velocity is redrawn from $\kappa$. For any $v_*>0$, define $p_* := \pp_{\kappa}(|v|>v_*)$, then for any $T>0$, we have
    \begin{equation}\label{eq:probbigV}
        \pp\po\sup_{t\in[0,T]} |V_t|>v_*\pf \leqslant (1+\gamma T) p_*.
    \end{equation}
\end{lemma}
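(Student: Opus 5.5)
The key observation is that the speed $|V_t|$ only changes at refreshment times; deterministic transport and bounces leave it unchanged. For BPS a bounce is a reflection, so it preserves $|v|$. For ZZP a bounce flips the sign of one coordinate, which also preserves $|v|$. So on $[0,T]$ the path $t\mapsto |V_t|$ is piecewise constant, and it takes at most $1+N_T$ distinct values: $|V_0|$ and the norms $|W_1|,\dots,|W_{N_T}|$ of the refreshed velocities. Here $N_T$ is the number of refreshment events in $[0,T]$, a Poisson variable with mean $\gamma T$.

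The plan is to realise the refreshments explicitly. Let the refreshment clock be a Poisson process of rate $\gamma$, independent of everything else. At its $k$-th arrival the new velocity is $W_k\sim\kappa$, with $(W_k)$ i.i.d., independent of $N_T$ and of the past. Then
\[
\Big\{\sup_{t\in[0,T]}|V_t|>v_*\Big\}\subseteq\{|V_0|>v_*\}\cup\bigcup_{k\le N_T}\{|W_k|>v_*\}.
\]
A union bound and conditioning on $N_T$ (Wald's identity) give
\[
\mathbb P\Big(\sup_{t\in[0,T]}|V_t|>v_*\Big)\le \mathbb P(|V_0|>v_*)+\mathbb E[N_T]\,p_*.
\]
Since $\mathbb E[N_T]=\gamma T$, this is at most $\mathbb P(|V_0|>v_*)+\gamma T p_*$.

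It remains to bound the first term. In the setting used throughout Section~\ref{sec:nonconvpdmp}, $p_0=q_0\mu_x\otimes\kappa$, so $V_0\sim\kappa$ and $\mathbb P(|V_0|>v_*)=p_*$. This yields \eqref{eq:probbigV}. I would state this assumption on $V_0$ explicitly, since the lemma needs it; the same argument works whenever the initial velocity marginal is $\kappa$.

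The only delicate point is justifying that the $W_k$ are independent of $N_T$ and identically distributed $\kappa$. This is not affected by the state-dependent bounce rates. The refreshment events can be built from a separate Poisson clock of rate $\gamma$, which is the standard thinning/superposition construction of the process. The drawn velocities are fresh independent samples regardless of $(X,V)$, and the bounce mechanism only permutes velocity directions without touching their magnitudes. With this construction the Wald step is immediate.
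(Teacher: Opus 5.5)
Your proof is correct and uses the same idea as the paper: the speed changes only at the Poisson refreshment times, so one conditions on $N_T$. The paper computes the conditional probability exactly as $1-(1-p_*)^{n+1}$, sums over the Poisson law to get $1-(1-p_*)e^{-\gamma T p_*}\le(1+\gamma T)p_*$, whereas you reach the same bound directly by a union bound and Wald's identity; you are also right to make explicit that $V_0\sim\kappa$ is needed, which the paper uses only implicitly through the factor $(1-p_*)^{n+1}$.
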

\begin{proof}
Let $N_t$ be the number of velocity refreshments up to time $t$, then $\{N_t\}_{t\ge 0}$ is a Poisson process, or equivalently, for any fixed $T$, $N_T$ is a Poisson random variable with rate $\gamma T$. Equivalently,
\[ \pp(N_T=n) = \frac{(\gamma T)^n}{n!}\exp(-\gamma T).\]
Now, conditioning on $N_T=n$, there are almost surely $n+1$ different values of $|V_t|$ since $|V_t|$ does not change between velocity refreshments. Hence 
\[ \pp\po\sup_{t\in[0,T]} |V_t|>v_* \Big|N_T=n\pf = 1- \pp\po|V_t|\le v_*, \forall t\in[0,T] \Big|N_T=n\pf = 1-(1-p_*)^{n+1}.\]
We can therefore control the probability of high velocity by
\begin{align*}
     \pp\po\sup_{t\in[0,T]} |V_t|>v_*\pf & = \sum_{n=0}^\infty  \pp\po\sup_{t\in[0,T]} |V_t|>v_* \Big|N_T=n\pf \pp(N_T=n) \\ & = \sum_{n=0}^\infty \big(1-(1-p_*)^{n+1}\big)\frac{(\gamma T)^n}{n!}\exp(-\gamma T) \\ & = 1-(1-p_*)\exp(-\gamma T p_*) \le (1+\gamma T)p_*.
\end{align*}
\end{proof}
We now return to the proof of Theorem \ref{thm:noquantconv} when $\kappa$ is Gaussian. The idea is that, when we fix $T>0$, the choice of $v_*$ also increases with $T$, so that $(1+\gamma T)p_*$ decays much faster than $\pp_{q_0\mu_x}(|X_0|>2Tv_*)$ as $T\to\infty$. Note that 
\[\pp_\kappa(|v|>R) \sim \int_{|v|>R}\exp(-\frac{|v|^2}{2})\dd v \sim \int_{R}^\infty r^{d-1}\exp(-\frac{r^2}{2})\dd r \sim R^{d-2}\exp(-\frac{R^2}{2}), \quad \textrm{ as }R\to\infty, \]
thus if we pick $v_* = \log(1+T)$,  we obtain
\begin{equation}\label{eq:Vtbigalg} \pp\po\sup_{t\in[0,T]} |V_t|>v_*\pf < (1+\gamma T) v_*^{d-2}\exp(-\frac{v_*^2}{2}) \sim (1+\gamma T)\log^{d-2}(1+T)\exp\left(-\frac{\log^2(1+T)}{2}\right),\end{equation}
which decays to zero faster than any algebraic power of $T$ as $T\to \infty$. Meanwhile, for our $q_0\mu_x \propto \bangle{x}^{-(d+\beta)}$ given in Example \ref{ex:fattail}, we have 
\begin{equation}\label{eq:p0tailGauss} \pp_{q_0\mu_x}(|X_0|>2Tv_*)\sim  (2Tv_*)^{-\beta}\sim (T\log(1+T))^{-\beta}, \end{equation}
which still decays algebraically fast to zero. Substituting \eqref{eq:p0tailGauss}, \eqref{eq:Vtbigalg}, \eqref{eq:muxtail} into Lemma \ref{lem:tail}, and we finish the proof of Theorem \ref{thm:noquantconv} when $\kappa$ is Gaussian following the identical argument in Section \ref{subsec:bddv}. For stretched exponential $q_0\mu_x \propto \exp(-\bangle{x}^\delta)$ given in Example \ref{ex:strexp}, if we compare \eqref{eq:Vtbigalg} and \eqref{eq:strexptail}, we require $v_*^2 \gg (Tv_*)^\delta$, or $v_* \sim T^{\frac{\delta}{2-\delta}+\varepsilon}$ for our argument to hold, so exponential convergence still fails if $(Tv_*)^\delta \sim T^{\delta+\frac{\delta^2}{2-\delta}+\delta\varepsilon} \ll T$, which is the case when $\delta<\frac{2}{3}$. 

\subsection{Relation with previous results}\label{subsec:previous2}

 Let us now  review some previous results of exponential convergence for BPS and ZZP. Comparing these results to Theorem~\ref{thm:noquantconv} provides some insights on the question of long-time convergence of these samplers.

For the BPS, by \cite[Theorem 4]{durmus2020geometric}, if $\kappa$ admits a Gaussian moment, then there exists $a,b,C,\lambda>0$ such that
\begin{equation}
    \label{eq:V-ergodicity}
\forall \nu \in \mathcal P(\R^{2d}),\ t\geqslant 0,\quad \| \nu P_t - \mu\|_{\mathcal V} \leqslant C e^{-\lambda t } \|\nu - \mu\|_{\mathcal V}
\end{equation}
with Lyapunov function
\[\mathcal V (x,v) = e^{a |x|} + e^{b|v|^2} \,.\]
In particular, in dimension 1, this applies to the ZZP which is then the BPS with $\kappa=\frac12 (\delta_{-1}+\delta_1)$. In higher dimension, for the ZZP, we may use the results from~\cite{bierkens2019ergodicity}, which would give the result with $\mathcal V(x,v)= e^{a |x|^2}$ for some $a>0$ (cf. \cite[Lemma 11]{bierkens2019ergodicity}). However, with a standard Gaussian target, the coordinates of the ZZP are independent one-dimensiional ZZP, and thus the previous result from \cite{durmus2020geometric} or the specific studiy of ZZP in dimension 1 from \cite[Theorem 5]{10.1214/16-AAP1217} (itself based on the Lyapunov function of \cite{fontbona2016long}) gives the geometric $\mathcal V$-ergodicity~\eqref{eq:V-ergodicity} with
\[\mathcal V(x,v) = \sum_{i=1}^d e^{a |x_i|}\]
for some $a>0$. These results show that, in  Theorem~\ref{thm:noquantconv}, the restriction to $\mathcal V$-norms with at most stretched exponential with any $\delta<1$ is sharp.

\medskip

Concerning $L^2$ approaches for PDMP samplers, the DMS approach has been applied in \cite{andrieu2021hypocoercivity,MRZ}, and the space-time Poincar{\'e} inequality approach in \cite{lu2022explicit}. In the particular cases of BPS and ZZP with standard Gaussian target measures, all these works apply and yield the exponential convergence 
\[ \forall \nu \in \mathcal P(\R^{2d}),\ t\geqslant 0,\quad \| g_t - 1\|_{L^2(\mu)} \leqslant C e^{-\lambda t } \|g_0 - 1\|_{L^2(\mu)}  \]
for some constant $C,\lambda>0$. This implies exponential decay in entropy when $g_0 \in L^2(\mu)$. Besides, by Cauchy-Schwarz, $g_0\in L^2(\mu)$ implies that the initial distribution has some Gaussian moments, which is clearly violated by our Examples \ref{ex:fattail} and \ref{ex:strexp}.

\section{Open Questions}\label{sec:conclusion}
While our negative result in Theorem \ref{thm:noquantconv} sheds light on the scenarios where exponential convergence fails due to heavy tails of the initial condition $q_0\mu_x$, there are still various questions that would be of interest.
\begin{enumerate}
    \item Does exponential convergence 
    \[\KL(p_t\|\mu)\le C_0\exp(-\lambda t)\] hold for ZZP and BPS, if $p_0$ has exponential tails, and $\mu$ satisfies LSI, for some constant $C_0$ depending on $p_0$? This holds if $\chi^2(p_0\|\mu)<\infty$ since $L^2$ controls the entropy, and the exponential decay in $L^2$ is known \cite{andrieu2021hypocoercivity, lu2022explicit}. 
    \item Can one further weaken our non-convexity condition given in Assumption \ref{ass:JLass1}, namely, $\nabla^2 U\ge -K \mathrm{Id}$ with $K<\rho$?
    \item Our non-convergence result is stated with $t$ being the ``PDE time'', which is different from the ``event time'' or the number of bounces and refreshments that is more directly related to the sampling complexity of the PDMP algorithms. In particular, a PDMP for sampling starting far away from the center of $\mu_x$ will spend a long ``PDE time'' moving in a straight line towards the center without a bouncing event, but zero ``event time'' making the straight line move.  If we instead consider the event time, do we get exponential convergence, or if non-quantitative convergence still holds?
    \item If we allow velocity refreshment rates to depend on position, what convergence result can we obtain? Can we accelerate convergence by letting $\gamma=\gamma(x) \rightarrow \infty$ as $|x|\to\infty$? (Indeed, in this situation, the arguments from Section~\ref{sec:nonconvpdmp} may fail). 
    \item For the family of processes introduced in~\cite{MRZ}, which contains the forward event-chain sampler from~\cite{michel2020forward} and interpolates continuously from RHMC to BPS, the arguments from Section~\ref{sec:nonconvpdmp} do not apply since the norm of the velocity may change at ``bounces''  (whose rate goes to infinity with $|\nabla U(x)|$). Do these processses share the entropic speedup of RHMC, or the absence of exponential entropic convergence of BPS?
\end{enumerate}

\section*{Acknowledgement}
The research of PM is supported by the project CONVIVIALITY (ANR-23-CE40-0003) of
the French National Research Agency. Part of this work is completed when LW visited Universit\'e Gustave Eiffel. LW would like to thank Universit\'e Gustave Eiffel for their hospitality. We would like to thank Manon Michel for mentioning open question (3). Generative artificial intelligence is crucially involved in this work, as LW used ChatGPT 5.5 Pro to find a counterexample to an estimate we had hoped to obtain, which guides us towards the direction of negative result in Theorem \ref{thm:noquantconv}, as well as the regularity argument in the proof of Theorem \ref{thm:rhmc}.

\bibliographystyle{plain}
\bibliography{main}
\end{document}